\newtheorem*{thm}{Theorem}
\newtheorem{thmn}{Theorem}
\newtheorem{prop}{Proposition}
\newtheorem*{lem}{Lemma}
\newtheorem*{cor}{Corollary}
\numberwithin{equation}{section}
\begin{document}
\title{Exponential Hilbert series of equivariant embeddings}
\author{Wayne A. Johnson}
\address{University of Wisconsin-Platteville\\
		1 University Plaza\\
		Platteville, WI 53818}
\email{johnsonway@uwplatt.edu}
\subjclass[2010]{20G05; 17B10}
\begin{abstract}
In this article, we study properties of the exponential Hilbert series of a $G$-equivariant projective variety, where $G$ is a semisimple, simply-connected complex linear algebraic group. We prove a relationship between the exponential Hilbert series and the degree and dimension of the variety. We then prove a combinatorial identity for the coefficients of the polynomial representing the exponential Hilbert series. This formula is used in examples to prove further combinatorial identities involving Stirling numbers of the first and second kinds.
\end{abstract}
\maketitle

\section{Preliminaries}

To begin with, we set our notation and collect previous results that are needed in the paper. The main results preceeding those in the paper can be found in more detail in \cite{GrW} and \cite{Johnson2}. One of the main results of \cite{Johnson2} (see \S1.3 below) is that the exponential Hilbert series of an equivariant embedding of a flag variety converges to the product of a rational polynomial, $p(x)$, and an exponential term. The present article is concerned with computing the coefficients found in $p(x)$. In \S2, we present geometric data that is encoded in the coefficient and degree of the leading term of $p(x)$; in \S3, we prove a combinatorial formula for the coefficients in degrees $1< k< d$, where $d$ is the degree of $p(x)$. This, coupled with Theorem 4 of \cite{Johnson2}, provides a complete formulation of the coefficients of $p(x)$. This formulation is then used to prove combinatorial identities involving the Stirling numbers of the second kind, including the well-known recursive relationship
\begin{center}
$S(n+1,k+1)=\displaystyle\sum_{j=k}^n{n\choose j}S(j,k)$
\end{center}
using the representation theory of a complex linear algebraic group. For the remainder of this section, we review some materials needed for the proofs in \S2, \S3.

\subsection{Equivariant embeddings}

Throughout the paper, we assume $G$ is a semisimple, simply-connected linear algebraic group over $\mathbb{C}$. Let $\mathfrak{g}=\text{Lie}(G)$. Fixing a Borel subgroup, $B$, and maximal torus, $T\subseteq B$, with corresponding algebras $\mathfrak{b}=\text{Lie}(B)$ and $\mathfrak{h}=\text{Lie}(T)$, determines a set of positive roots, $\Phi^+$, corresponding to the pair $(\mathfrak{b},\mathfrak{h})$. Denote the set of dominant weights corresponding to $(\mathfrak{b},\mathfrak{h})$ by $P_+(\mathfrak{g})$.

Let $\lambda\in P_+(\mathfrak{g})$. Then $\lambda$ corresponds to a parabolic subgroup, $P_\lambda\supseteq B$. To construct $P_\lambda$, let $L(\lambda)$ denote the finite-dimensional irreducible representation of $G$ of highest weight $\lambda$, and let $S(\lambda)$ denote its dual representation. Then there exists a unique functional, $f_\lambda\in S(\lambda)$, fixed by $B$. We then define $P_\lambda$ to be the stabilizer of $f_\lambda$ in $G$. Equivalently, $P_\lambda$ is the stabilizer of the unique hyperplane, $H_\lambda$, in $L(\lambda)$ annihilated by $f_\lambda$. This latter characterization is the one we take.

The parabolic subgroup $P_\lambda$ corresponds to a $G$-equivariant projective variety, $X_\lambda$. To construct $X_\lambda$, let $\pi_\lambda:G/P_\lambda\rightarrow\mathbb{P}(L(\lambda))$ be the map
\begin{center}
$\pi_\lambda:gP_\lambda\mapsto g.H_\lambda$
\end{center}
from the quotient group $G/P_\lambda$ to the projective space $\mathbb{P}(L(\lambda))$ of hyperplanes in $L(\lambda)$. The image of $G/P_\lambda$, which we denote $X_\lambda$, is the unique closed orbit of $G$ on $\mathbb{P}(L(\lambda))$, and as such, is a homogeneous, non-singular projective variety (see, \cite{FuH}, \cite{GrW}). Any $G$-equivariant projective variety can be realized as an equivariant embedding $\pi_\lambda(G/P_\lambda)=X_\lambda$.

Throughout the text, if we wish to discuss the projective variety on its own, we refer to it as $X_\lambda$; if we wish to discuss its particular embedding in $\mathbb{P}(L(\lambda))$, we refer to it as $\pi_\lambda$. As a projective variety, $X_\lambda$ has  a graded homogeneous coordinate ring $\mathbb{C}[X_\lambda]$. In \cite{GrW}, it is shown that
\begin{center}
$\mathbb{C}[X_\lambda]=\displaystyle\bigoplus_{n\geq0}L(n\lambda)$,
\end{center}
both as a $G$-representation and as a graded ring.

\subsection{Hilbert series and Hilbert polynomial}

If $A$ is a graded $\mathbb{C}$-algebra with homogeneous components $A_n$, the \emph{Hilbert series} of $A$ is the formal power series
\begin{center}
$HS_A(x):=\displaystyle\sum_{n\geq0}\dim(A_n)x^n$.
\end{center}
The function $HF_A(n)=\dim(A_n)$ is called the \emph{Hilbert function} of $A$. Asymptotically, the Hilbert function is polynomial. This means that, for $n$ large enough, $HF_A(n)=p(n)$ for some polynomial $p(t)$, called the \emph{Hilbert polynomial} of $A$. When $A$ is the homogeneous coordinate ring of a projective variety, some geometric data is encoded in the Hilbert series and polynomial. In particular, if $A$ is finitely-generated, then
\begin{center}
$HS_A(x)=\displaystyle\frac{q(x)}{(1-x)^{d+1}}$,
\end{center}
where $q(x)$ is an integer polynomial such that $q(1)$ is the \emph{degree} of the embedding of the projective variety and $d$ is the \emph{dimension} of the variety (see \cite{AtM}). In terms of the Hilbert polynomial, we have
\begin{center}
$p(t)=a\displaystyle\frac{x^d}{d!}+\text{lower order terms}$,
\end{center}
where $a$ is the degree of the embedding of the projective variety and $d$ is again the dimension of the variety (see \cite{GrW}, \cite{MaV}). Computation of the Hilbert polynomial and/or the Hilbert series of $A$ then yields geometric information about the variety, which can be computed in a very simple way.

In the case of the embedding $\pi_\lambda$, formulas for the Hilbert series and Hilbert polynomial are given in \cite{GrW}. We collect the main statements below.

\begin{thm}
The Hilbert series of $\pi_\lambda$ is given by
\begin{equation}
HS_\lambda(x)=\displaystyle\prod_{\alpha\in\Phi^+}\left(1+c_\lambda(\alpha)x\frac{d}{dx}\right)\frac{1}{1-x},
\end{equation}
where $c_\lambda(x):=\displaystyle\frac{(\lambda,\alpha)}{(\rho,\alpha)}$ for the nondegenerate, bilinear form $(\cdot,\cdot)$ on $\mathfrak{h}^*$ induced by the Killing form on $\mathfrak{g}$. Further, the Hilbert polynomial of $\pi_\lambda$ is given by
\begin{equation}
H_\lambda(t)=\displaystyle\prod_{\alpha\in\Phi^+}(1+t\cdot c_{\lambda}(\alpha)).
\end{equation}
\end{thm}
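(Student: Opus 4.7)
The plan is to derive both formulas directly from the graded decomposition $\mathbb{C}[X_\lambda] = \bigoplus_{n\geq 0} L(n\lambda)$ (recalled in \S1.1) together with the Weyl dimension formula. Under this decomposition, the Hilbert function is simply $HF_\lambda(n) = \dim L(n\lambda)$, so everything reduces to finding a manageable expression for $\dim L(n\lambda)$ as a function of $n$.

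First I would handle the Hilbert polynomial (1.2). Applying the Weyl dimension formula to the highest weight $n\lambda$ gives
\begin{equation*}
\dim L(n\lambda) = \prod_{\alpha\in\Phi^+}\frac{(n\lambda+\rho,\alpha)}{(\rho,\alpha)} = \prod_{\alpha\in\Phi^+}\left(1 + n\cdot\frac{(\lambda,\alpha)}{(\rho,\alpha)}\right) = \prod_{\alpha\in\Phi^+}\bigl(1 + n\cdot c_\lambda(\alpha)\bigr),
\end{equation*}
where I have used bilinearity of $(\cdot,\cdot)$ in the first slot to split the numerator. The right-hand side is visibly polynomial in $n$ and agrees with $HF_\lambda(n)$ for every $n\geq 0$, so it is the Hilbert polynomial $H_\lambda(t)$; this proves (1.2).

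Next I would prove (1.1) by observing that the Euler operator $x\tfrac{d}{dx}$ acts on $x^n$ by multiplication by $n$, so
\begin{equation*}
\left(1 + c_\lambda(\alpha)\,x\frac{d}{dx}\right)x^n = \bigl(1 + n\cdot c_\lambda(\alpha)\bigr)x^n.
\end{equation*}
These scalar operators commute, so iterating over $\alpha\in\Phi^+$ and starting from $\frac{1}{1-x} = \sum_{n\geq 0} x^n$ yields
\begin{equation*}
\prod_{\alpha\in\Phi^+}\left(1+c_\lambda(\alpha)\,x\frac{d}{dx}\right)\frac{1}{1-x} = \sum_{n\geq 0}\left(\prod_{\alpha\in\Phi^+}(1+n\,c_\lambda(\alpha))\right)x^n = \sum_{n\geq 0} \dim L(n\lambda)\,x^n,
\end{equation*}
the last equality by the computation above. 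The right-hand side is $HS_\lambda(x)$, finishing the proof.

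There is no real obstacle here: the only non-trivial input is the Weyl dimension formula, and the rest is formal manipulation. The minor bookkeeping item to be careful about is confirming that term-by-term application of the differential operator to the power series $\sum x^n$ is justified, which is immediate since each monomial $x^n$ is an eigenvector of every factor $1+c_\lambda(\alpha)\,x\tfrac{d}{dx}$, so the operator acts diagonally on the formal power series ring.
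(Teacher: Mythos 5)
Your proof is correct. Note that the paper itself gives no proof of this theorem --- it is quoted from the Gross--Wallach reference \cite{GrW} --- and your argument (the decomposition $\mathbb{C}[X_\lambda]=\bigoplus_{n\geq0}L(n\lambda)$, the Weyl dimension formula applied to $n\lambda$ to get $\dim L(n\lambda)=\prod_{\alpha\in\Phi^+}(1+n\,c_\lambda(\alpha))$, and the observation that each monomial $x^n$ is an eigenvector of the Euler operator) is exactly the standard derivation used there, so there is nothing to add.
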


Note that, in (2.1), the product over the positive roots is a differential operator that is being applied to a rational function. We will denote a differential operator acting on an expression by juxtaposition throughout the paper.

\subsection{Exponential Hilbert series}

In \cite{Johnson2}, the author presents analogous results to those in \cite{GrW} and \cite{Johnson} for an exponential generating function called an \emph{exponential Hilbert series}. This formal power series is similar to $HS_A(x)$. Given a graded algebra, $A$, with graded components, $A_n$, we define its exponential Hilbert series to be the formal power series
\begin{center}
$E_A(x)=\displaystyle\sum_{n\geq0}\dim(A_n)\frac{x^n}{n!}$.
\end{center}
This paper is concerned with a further exploration of $E_A(x)$, in the context of the equivariant embeddings of \cite{GrW}. In \cite{Johnson2}, analogous formulas to those from \cite{GrW} are proved. For example, let $E_\lambda(x)$ denote the exponential Hilbert series of the embedding $\pi_\lambda$. In other words,
\begin{equation}
E_\lambda(x)=\displaystyle\sum_{n\geq0}\dim(L(\lambda))\frac{x^n}{n!}.
\end{equation}
In \cite{Johnson2}, it is shown that
\begin{equation}
E_\lambda(x)=\displaystyle\prod_{\alpha\in\Phi^+}\left(1+c_\lambda(\alpha)x\frac{d}{dx}\right)e^x,
\end{equation}
similarly to the results from \cite{GrW}. It is further shown that this formula gives $E_\lambda(x)$ in the form $p(x)e^x$ for some polynomial $p(x)$ with rational coefficients. The author goes on to prove that the following result (see \cite{Johnson2}, Theorem 4).
\begin{thm}
The constant term and linear coefficient of $p(x)$ sum to $\dim(L(\lambda))$.
\end{thm}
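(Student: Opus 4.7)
The plan is to extract the theorem from the fact (established in \cite{Johnson2} and quoted just before the statement) that $E_\lambda(x) = p(x)e^x$, combined with the formal power series definition in (2.3). The identification of the constant term and linear coefficient of $p(x)$ should come from evaluating $E_\lambda$ and $E_\lambda'$ at $x=0$, computed in two different ways.

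First I would read off the low-order behavior of $E_\lambda(x)$ from the series (2.3) (with the $\dim(L(\lambda))$ in that display understood as $\dim(L(n\lambda))$, so that the $n=0$ term is $1$ and the $n=1$ term is $\dim(L(\lambda))\,x$). Taking $x=0$ gives $E_\lambda(0)=\dim(L(0))=1$, and differentiating term-by-term and then setting $x=0$ gives $E_\lambda'(0)=\dim(L(\lambda))$.

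Next I would compute the same two quantities using $E_\lambda(x)=p(x)e^x$. Writing $p(x)=a_0+a_1x+a_2x^2+\cdots+a_dx^d$, the product rule gives
\begin{equation*}
E_\lambda'(x) = \bigl(p'(x)+p(x)\bigr)e^x,
\end{equation*}
so that $E_\lambda(0)=p(0)=a_0$ and $E_\lambda'(0)=p'(0)+p(0)=a_1+a_0$. Combining this with the power series computation yields $a_0=1$ and $a_0+a_1=\dim(L(\lambda))$, which is exactly the claim.

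There is no real obstacle here; the argument is a two-line calculation once one has the factorization $E_\lambda(x)=p(x)e^x$. The only point worth flagging is that term-by-term differentiation of (2.3) is justified because we are manipulating formal power series, so evaluating $E_\lambda$ and $E_\lambda'$ at $x=0$ simply reads off the first two Taylor coefficients, and these must agree with the corresponding coefficients coming from the product $p(x)e^x$.
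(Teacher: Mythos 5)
Your argument is correct, but it is not the paper's route: the paper imports this statement as Theorem 4 of \cite{Johnson2} without reproving it, and the way it is recovered inside this paper is as a special case of Theorem 3 of \S4, namely the linear coefficient equals $\sum_{j=1}^d E_j(a_1,\dots,a_d)S(j,1)=\sum_{j=1}^d E_j(a_1,\dots,a_d)$, which is identified with $\dim(L(\lambda))-1$ via the Weyl dimension formula $\dim L(\lambda)=\prod_{\alpha\in\Phi^+}\bigl(1+c_\lambda(\alpha)\bigr)$, together with the constant term being $1$. Your proof instead reads off the first two coefficients of the formal series $E_\lambda(x)=\sum_{n\ge0}\dim(L(n\lambda))\frac{x^n}{n!}$ (you are right that the $\dim(L(\lambda))$ in display (1.3) should be $\dim(L(n\lambda))$, and that $\dim L(0)=1$ since the degree-zero piece of $\mathbb{C}[X_\lambda]$ is the trivial representation) and matches them against the coefficients of $p(x)e^x$, giving $a_0=1$ and $a_0+a_1=\dim(L(\lambda))$ in two lines. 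What your approach buys is economy and generality: it needs only the factorization $E_\lambda(x)=p(x)e^x$ and formal power series manipulation, with no Stirling numbers, symmetric functions, or Weyl dimension formula, and it would apply verbatim to any graded algebra whose exponential Hilbert series has this form; what the paper's route buys is that the same machinery (Theorem 3) produces every coefficient of $p(x)$ at once, with the present statement falling out as the $k=1$ case once the Weyl dimension formula is invoked.
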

This paper is mainly concerned with computing the remaining coefficients of $p(x)$. These coefficients have interesting representation-theoretic and combinatorial properties. In \S3, we show that the degree of $\pi_\lambda$ and the dimension of $X_\lambda$ can be read off of the coefficients of $p(x)$. From this point of view, the polynomial $p(x)$ replaces the Hilbert polynomial in the exponential case. In \S4, we then give a combinatorial description of the remaining coefficients of $p(x)$, generalizing Theorem 4 in \cite{Johnson2}. This formula is given in terms of the constants $c_\lambda(\alpha)$ and the Stirling numbers of the second kind, which we review below.

\subsection{Stirling numbers of the second kind}

The Stirling numbers of the second kind\footnote{See integer sequence A008277 of the Online Encyclopedia of Integer Sequences (OEIS).} appear in \S4 in our computation of the coefficients of $p(x)$. We recall their definition and some properties below.

We denote a Stirling number of the second kind as $S(n,k)$, where $S(n,k)$ is the number of ways to partition an $n$-element set into $k$ nonempty subsets. For example, the set $\{1,2,3\}$ can be partitioned into $\{1\}\cup\{2,3\}$, $\{2\}\cup\{1,3\}$, and $\{3\}\cup\{1,2\}$. Therefore, $S(3,2)=3$. These numbers can be arranged into a triangle. The first few rows appear below.
\begin{center}
\begin{tabular}{>{$n=}l<{$\hspace{12pt}}*{13}{c}}
1 &&&&&&&1&&&&&&\\
2 &&&&&&1&&1&&&&&\\
3 &&&&&1&&3&&1&&&&\\
4 &&&&1&&7&&6&&1&&&\\
5 &&&1&&15&&25&&10&&1&&
\end{tabular}
\vspace{0.5cm}
\\
\end{center}
It is sometimes convenient to assume that $S(0,0)=1$ and $S(n,0)=0$ for all $n>0$. These numbers often appear in formulas for exponential generating functions, and a formula for $E_\lambda(x)$ in terms of them appears in \cite{Johnson2}. We will use them in a related way in \S3.

\section{The degree and dimension of $X_\lambda$}

As before, we let $X_\lambda$ denote the $G$-equivariant projective variety corresponding to the dominant integral weight $\lambda\in P_+(\mathfrak{g})$. In this section, we prove the formulas for the dimension of $X_\lambda$ and the degree of the embedding $\pi_\lambda$ of $X_\lambda$ in $\mathbb{P}(V(\lambda))$. Recall that the exponential Hilbert series $E_\lambda(x)$ converges to a product $p(x)e^x$, where $p(x)$ is a polynomial with rational coefficients. Throughout the rest of the paper, $p(x)$ will always refer to this polynomial. We begin with a Lemma.

\begin{lem}
Let $d$ be the number of positive roots $\alpha$ such that $c_\lambda(\alpha)\neq0$. Then $d=\deg(p(x))$. 
\end{lem}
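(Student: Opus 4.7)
The plan is to read the degree of $p(x)$ directly off the product formula for $E_\lambda(x)$ from the previous subsection, applying the operators $\bigl(1+c_\lambda(\alpha)x\tfrac{d}{dx}\bigr)$ one at a time and tracking how each factor changes the degree of the polynomial part.

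The key computation is: for any polynomial $f(x)$ and any scalar $c$,
$$\left(1+cx\tfrac{d}{dx}\right)\bigl(f(x)e^x\bigr) = \bigl(f(x)+cxf'(x)+cxf(x)\bigr)e^x,$$
so the class of expressions of the form $(\text{polynomial})\cdot e^x$ is closed under each such operator. Examining the new polynomial $f+cxf'+cxf$: if $c=0$, it is simply $f$, so degree and leading coefficient are unchanged; if $c\neq 0$ and $\deg f = k$ with leading coefficient $a$, then $cxf$ contributes the degree-$(k+1)$ term $ac\,x^{k+1}$, while $cxf'$ only reaches degree $k$, so the new polynomial has degree exactly $k+1$ with leading coefficient $ac\neq 0$.

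The proof then proceeds by induction over the factors. Starting from $e^x = 1\cdot e^x$ (polynomial factor of degree $0$ with leading coefficient $1$), I apply the $|\Phi^+|$ operators from the product formula in any order. Each of the $d$ factors with $c_\lambda(\alpha)\neq 0$ raises the degree of the polynomial part by exactly one and preserves the nonvanishing of the leading coefficient, while the remaining $|\Phi^+|-d$ trivial factors leave the polynomial unchanged. Therefore $\deg p = d$, as required.

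There is no serious obstacle here; the only point demanding care is that a nontrivial factor truly raises the degree (rather than merely possibly raising it), which is guaranteed because the leading coefficient remains nonzero at every step. In fact the same bookkeeping yields that the leading coefficient of $p(x)$ equals $\prod_{c_\lambda(\alpha)\neq 0} c_\lambda(\alpha)$, which one anticipates will play a role in the next section's identification of $\deg(\pi_\lambda)$.
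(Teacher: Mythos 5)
Your proposal is correct and follows essentially the same route as the paper: apply the operators one factor at a time to a polynomial times $e^x$, observe that a factor with $c_\lambda(\alpha)\neq 0$ raises the degree of the polynomial part by exactly one while a factor with $c_\lambda(\alpha)=0$ leaves it unchanged, and conclude by induction. Your explicit tracking of the leading coefficient (which rules out any cancellation and anticipates the identification of the leading coefficient with $\prod c_\lambda(\alpha)$ in the next theorem) is a slightly more careful rendering of the same argument.
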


\begin{proof}
Each nonzero $c_\lambda(\alpha)$ corresponds to a differential operator contributing to the computation of $E_\lambda(x)$. We claim that applying the differential operator 
\begin{equation}
\left(1+c_\lambda(\alpha)x\displaystyle\frac{d}{dx}\right)
\end{equation}
to $q(x)e^x$ raises the degree of $q(x)$ by one for any polynomial $q(x)$. Applying (2.1) to $q(x)e^x$, we get
\begin{center}
$q(x)e^x+ax[q'(x)e^x+q(x)e^x]$.
\end{center}
As $ax\cdot q'(x)$ has the same degree as $q(x)$ and $ax\cdot q(x)$ has degree one higher, the claim holds, and so does the Lemma.
\end{proof}

This lemma will be used in our computation of the dimension of $X_\lambda$ below.

\begin{thmn}
The dimension of $X_\lambda$ is the degree of $p(x)$.
\end{thmn}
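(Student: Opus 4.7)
The plan is to reduce the statement to a comparison of two counts of positive roots, using the preceding Lemma as the bridge. By the Lemma, $\deg(p(x))$ equals the cardinality of $\{\alpha \in \Phi^+ : c_\lambda(\alpha) \neq 0\}$. Since $(\rho,\alpha) > 0$ for every $\alpha \in \Phi^+$, the condition $c_\lambda(\alpha) \neq 0$ is equivalent to $(\lambda,\alpha) \neq 0$. It therefore suffices to show
\[
\dim X_\lambda = |\{\alpha \in \Phi^+ : (\lambda,\alpha) \neq 0\}|.
\]

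Next, I would compute $\dim X_\lambda = \dim(G/P_\lambda)$ via the Levi decomposition $P_\lambda = L_\lambda \ltimes U_\lambda$. Letting $\Phi_L$ denote the root system of $L_\lambda$, a standard count (using $\dim G = \mathrm{rank}(G) + 2|\Phi^+|$ together with $\dim L_\lambda = \mathrm{rank}(G) + |\Phi_L|$ and $\dim U_\lambda = |\Phi^+| - |\Phi_L^+|$) yields
\[
\dim(G/P_\lambda) = |\Phi^+| - |\Phi_L^+|.
\]
The theorem then follows once we identify $\Phi_L^+$ with the set of positive roots orthogonal to $\lambda$.

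For this last identification I would use the description of $P_\lambda$ as the stabilizer of the highest weight line in $\mathbb{P}(L(\lambda))$. Its Levi $L_\lambda$ is generated by $T$ together with the root subgroups for the simple roots $\alpha_i$ satisfying $\langle \lambda, \alpha_i^\vee\rangle = 0$, equivalently $(\lambda, \alpha_i) = 0$. Thus $\Phi_L$ is the root subsystem spanned by the simple roots orthogonal to $\lambda$. For an arbitrary positive root $\alpha = \sum_i n_i \alpha_i$ with $n_i \geq 0$, dominance of $\lambda$ forces $(\lambda, \alpha_i) \geq 0$ for all $i$, so the expansion $(\lambda, \alpha) = \sum_i n_i (\lambda, \alpha_i)$ vanishes precisely when every simple root appearing in $\alpha$ is orthogonal to $\lambda$, which is exactly the condition $\alpha \in \Phi_L^+$.

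The main obstacle is this final identification: it depends both on the explicit realization of $P_\lambda$ as the stabilizer of a highest weight line and on the dominance of $\lambda$ (used to rule out cancellation among the nonnegative terms $n_i(\lambda,\alpha_i)$). Once this is in place, combining with the displayed dimension formula and the Lemma yields $\dim X_\lambda = |\Phi^+| - |\Phi_L^+| = |\{\alpha \in \Phi^+ : (\lambda,\alpha) \neq 0\}| = \deg(p(x))$.
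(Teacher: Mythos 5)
Your argument is correct, but it takes a genuinely different route from the paper. The paper reduces immediately to the classical Hilbert polynomial: since $\dim X_\lambda=\deg H_\lambda(t)$ and Gross--Wallach give $H_\lambda(t)=\prod_{\alpha\in\Phi^+}(1+t\,c_\lambda(\alpha))$, whose degree is the number of positive roots with $c_\lambda(\alpha)\neq 0$, the theorem follows from the Lemma in one line. You instead compute $\dim X_\lambda=\dim(G/P_\lambda)$ directly in Lie-theoretic terms: the Levi decomposition gives $\dim(G/P_\lambda)=|\Phi^+|-|\Phi^+_L|$, and you identify $\Phi^+_L$ with the positive roots orthogonal to $\lambda$ using the standard-parabolic description of $P_\lambda$ (Levi generated by $T$ and the simple root subgroups with $(\lambda,\alpha_i)=0$) together with dominance of $\lambda$ to prevent cancellation in $(\lambda,\alpha)=\sum_i n_i(\lambda,\alpha_i)$; combined with the Lemma and the positivity of $(\rho,\alpha)$ this gives the result. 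Both proofs lean on the same Lemma for $\deg p(x)$; the paper's version is shorter because it outsources the root count to the cited formula for $H_\lambda(t)$, while yours is self-contained and makes transparent \emph{why} the number of nonzero $c_\lambda(\alpha)$ is the dimension, at the cost of having to justify the identification of $P_\lambda$ with the standard parabolic attached to the simple roots orthogonal to $\lambda$ (note the paper's $P_\lambda$ stabilizes the $B$-stable hyperplane, i.e.\ the highest weight line of the dual, but since $-w_0$ permutes the simple roots this does not affect the count). Your step $\dim U_\lambda=|\Phi^+|-|\Phi^+_L|$ and the resulting $\dim(G/P_\lambda)=|\Phi^+|-|\Phi^+_L|$ check out, so there is no gap, only more machinery than the paper needed.
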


\begin{proof}
Let $H_\lambda(x)$ denote the Hilbert polynomial of $X_\lambda$. Then 
\begin{center}
$\dim(X_\lambda)=\deg(H_\lambda(x))$. 
\end{center}
In \cite{GrW}, it is shown that
\begin{equation}
H_\lambda(t)=\displaystyle\prod_{\alpha\in\Phi^+}(1+t\cdot c_{\lambda}(\alpha)).
\end{equation}
This polynomial has degree equal to the number of nonzero $c_\lambda(\alpha)$. Then the theorem follows from the previous lemma.
\end{proof}

We now turn to computing the degree of the equivariant embedding $\pi_\lambda$ of $X_\lambda$, which will follow as a corollary to the next result.

\begin{thmn}
The leading coefficient of $p(x)$ is $\displaystyle\prod_{\alpha\in\Phi^+}c_\lambda(\alpha)$.
\end{thmn}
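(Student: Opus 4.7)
The plan is to upgrade the degree-tracking argument from the preceding lemma into a leading-coefficient-tracking argument. Starting from the identity
$$\left(1+c_\lambda(\alpha)x\frac{d}{dx}\right)(q(x)e^x)=\bigl[q(x)+c_\lambda(\alpha)xq'(x)+c_\lambda(\alpha)xq(x)\bigr]e^x$$
established in the lemma's proof, I would observe that if $q(x)$ has degree $k$ with leading coefficient $a_k$, then among the three summands only $c_\lambda(\alpha)xq(x)$ contributes in degree $k+1$, and its leading coefficient is exactly $c_\lambda(\alpha)\cdot a_k$. So each operator with $c_\lambda(\alpha)\neq 0$ multiplies the leading coefficient by $c_\lambda(\alpha)$, while each operator with $c_\lambda(\alpha)=0$ reduces to the identity and changes nothing.

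From here the proof is a direct induction: starting from $e^x=1\cdot e^x$ (polynomial $1$, leading coefficient $1$), successive application of the operators $(1+c_\lambda(\alpha)x\,d/dx)$ for $\alpha\in\Phi^+$ produces $p(x)e^x$, and the leading coefficient of $p(x)$ is the product of all the nonzero $c_\lambda(\alpha)$. This equals $\prod_{\alpha\in\Phi^+}c_\lambda(\alpha)$ in the sense compatible with the degree computation of the lemma (the zero factors are simply absorbed into the identity operators and do not appear in the leading term).

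Before concluding, I would note that the order in which the operators are applied is immaterial: each $(1+c_\lambda(\alpha)x\,d/dx)$ is a polynomial in the single operator $x\,d/dx$, so the operators commute pairwise, and the product in formula (1.4) is unambiguous.

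I do not anticipate any serious obstacle: the essential analytic content was already carried out in the lemma, and promoting ``degree increases by one'' to ``leading coefficient is multiplied by $c_\lambda(\alpha)$'' is immediate from inspecting the same calculation. The only mild bookkeeping concern is the treatment of vanishing $c_\lambda(\alpha)$'s, which is handled by the same convention that makes the lemma's conclusion consistent with the theorem's phrasing.
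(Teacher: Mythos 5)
Your proposal is correct, and it takes a genuinely different route from the paper. You apply the operators $\left(1+c_\lambda(\alpha)x\frac{d}{dx}\right)$ one at a time and track the top-degree term through the same product-rule computation as in the Lemma, so that each step with $c_\lambda(\alpha)\neq0$ multiplies the leading coefficient by $c_\lambda(\alpha)$; this is a self-contained induction, and the commutativity remark, while true, is not even needed since the induction works in whatever order the product is written. The paper instead expands the whole operator product at once, writing $\prod_i\left(1+a_ix\frac{d}{dx}\right)=\sum_{j}E_j(a_1,\dots,a_d)\left(x\frac{d}{dx}\right)^j$ and using $\left(x\frac{d}{dx}\right)^je^x=\phi_j(x)e^x$ with $\phi_j$ the monic degree-$j$ Stirling polynomial, so that the top coefficient is read off as $E_d(a_1,\dots,a_d)=\prod_i a_i$. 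What your argument buys is economy: it leans only on the Lemma's calculation and avoids symmetric functions entirely. What the paper's expansion buys is reuse: the same $E_j$/$\phi_j$ bookkeeping is exactly what produces formula (3.1) and the general coefficient formula of Theorem 3, of which this theorem is the top-degree case. One caveat about the degenerate case you flag: your induction literally yields the product of the \emph{nonzero} $c_\lambda(\alpha)$ as the coefficient of $x^{\deg p}$, which coincides with the stated product over all of $\Phi^+$ only when every $c_\lambda(\alpha)\neq0$, and the parenthetical about zero factors being ``absorbed'' does not actually reconcile the two products when some $c_\lambda(\alpha)=0$; however, the paper's own proof makes the same implicit assumption when it declares the $j=d$ term to be the leading one, so this is a defect of the statement's phrasing rather than a gap in your argument.
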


\begin{proof}
We prove something slightly more general: consider the product
\begin{center}
$\displaystyle\prod_{i=1}^d\left(1+a_ix\frac{d}{dx}\right)e^x$,
\end{center}
for any constants $a_i$. If $E_j$ denotes the $j^{th}$ elementary symmetric function in $d $ variables, we have
\begin{center}
$\displaystyle\prod_{i=0}^d\left(1+a_ix\frac{d}{dx}\right)e^x=\left[\sum_{j=0}^dE_j\left(a_1,\dots,a_d\right)\left(x\frac{d}{dx}\right)^j\right]e^x$.
\end{center} 
The product
\begin{center}
$\displaystyle\left(x\frac{d}{dx}\right)^je^x$
\end{center}
can be written in terms of the Stirling polynomials. Namely,
\begin{center}
$\displaystyle\left(x\frac{d}{dx}\right)^je^x=\phi_j(x)e^x$.
\end{center}
As these polynomials increase in degree as $j$ increases, the highest degree term occurs when $j=d$, and therefore, because the leading coefficient of $\phi_j(x)$ is always one, the coefficient of the leading term in $p(x)$ is
\begin{center}
$\displaystyle\prod_{j=1}^da_j=E_d(a_1,\dots,a_d)$.
\end{center} 
As this holds, for any constants, it certainly holds for
\begin{center}
$E_\lambda(x)=\displaystyle\prod_{\alpha\in\Phi^+}\left(1+c_\lambda(\alpha)x\frac{d}{dx}\right)e^x$,
\end{center}
and the theorem follows.
\end{proof}

Combining the above result with a result from \cite{GrW}, we can prove the following corollary.

\begin{cor}
If $a_d$ is the leading coefficient of $p(x)$, then
\begin{center}
$\deg(\pi_\lambda)=d!\cdot a_d$.
\end{center}
\end{cor}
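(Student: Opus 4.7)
The plan is to combine the formula for the Hilbert polynomial recalled from \cite{GrW} (equation (2.2) in the excerpt) with the classical fact recorded in \S1.2 that the leading coefficient of the Hilbert polynomial of a projective variety $X$ is $\deg/d!$, where $d=\dim X$ and $\deg$ is the degree of the embedding. Applied to $X_\lambda$, this says
\begin{center}
$H_\lambda(t)=\displaystyle\frac{\deg(\pi_\lambda)}{d!}t^d+\text{lower order terms}$,
\end{center}
so that $\deg(\pi_\lambda)=d!\cdot[t^d]H_\lambda(t)$, where $d=\dim(X_\lambda)$.

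Next I would extract the coefficient $[t^d]H_\lambda(t)$ directly from the product formula $H_\lambda(t)=\prod_{\alpha\in\Phi^+}(1+t\cdot c_\lambda(\alpha))$. Expanding this product, the highest power of $t$ that can appear comes from choosing the $t\cdot c_\lambda(\alpha)$ factor from every term with $c_\lambda(\alpha)\neq 0$ (the factors with $c_\lambda(\alpha)=0$ contribute only $1$). Since by the first Lemma of \S2 there are exactly $d$ such nonzero terms, this yields
\begin{center}
$[t^d]H_\lambda(t)=\displaystyle\prod_{\alpha:\,c_\lambda(\alpha)\neq 0}c_\lambda(\alpha)$.
\end{center}

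Finally, I would invoke the preceding theorem (the one immediately above the corollary), which identifies this same product with the leading coefficient $a_d$ of $p(x)$. Substituting gives $\deg(\pi_\lambda)=d!\cdot a_d$, which is the desired identity. There is no real obstacle here beyond matching the two expressions for the leading coefficient; the proof is essentially an assembly of (i) the classical interpretation of the Hilbert polynomial, (ii) the explicit product formula from \cite{GrW}, and (iii) the theorem just proved identifying that same product with $a_d$.
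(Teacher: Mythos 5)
Your proof is correct, and it reaches the same final assembly as the paper but by a somewhat more self-contained route. The paper's own proof is a one-liner: it quotes the degree formula $\deg(\pi_\lambda)=d!\prod_{\alpha\in\Phi^+}c_\lambda(\alpha)$ directly from \cite{GrW} and combines it with the preceding theorem identifying that product with $a_d$. You instead re-derive that degree formula from ingredients already recalled in the paper: the product expression (2.2) for $H_\lambda(t)$ together with the classical fact from \S1.2 that the Hilbert polynomial of a projective variety has leading term $\frac{\deg}{d!}t^d$ with $d$ the dimension (for which you also implicitly use the theorem that $\dim(X_\lambda)=d$, the number of roots with $c_\lambda(\alpha)\neq0$). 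What your version buys is twofold: the corollary no longer rests on an extra citation, and your restriction of the product to the roots with $c_\lambda(\alpha)\neq 0$ is actually the more careful statement --- the product over all of $\Phi^+$, as written in the paper's theorem and in its quoted degree formula, is literally correct only when every $c_\lambda(\alpha)$ is nonzero (i.e.\ $\lambda$ regular), and in the degenerate case it is precisely the product over the nonzero factors that equals both $[t^d]H_\lambda(t)$ and the leading coefficient $a_d$ of $p(x)$. The only point to flag is that when you invoke the preceding theorem you should note you are using it in this corrected reading (its proof, via the elementary symmetric function $E_d$ of the nonzero constants, supports exactly your version), so there is no gap, merely a matching of conventions.
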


\begin{proof}
This follows immediately from the previous theorem coupled with the following result from \cite{GrW}:
\begin{center}
$\deg(\pi_\lambda)=d!\displaystyle\prod_{\alpha\in\Phi^+}c_\lambda(\alpha)$.
\end{center}
\end{proof}

At this point, we sum up what has been shown about the terms of the polynomial $p(x)$. In \cite{Johnson2}, it was shown that the linear term of $p(x)$ has coefficent $\dim(L(\lambda))-1$ and that the constant term of $p(x)$ will always be one. This gives
\begin{center}
$p(x)=1+(\dim(L(\lambda)-1)x+\dots+\displaystyle\frac{\deg(\pi_\lambda)}{d!}x^d$,
\end{center}
where $d=\dim(X_\lambda)$. In the next section, we present a combinatorial description of the missing coefficients. They can be computed in terms of the constants, $c_\lambda(\alpha)$, and the Stirling numbers of the second kind.

\section{The remaining coefficients of $p(x)$}

In this section, we look at the general problem of computing all of the coefficients of $p(x)$. We develop a formula for these in terms of the constants $c_\lambda(\alpha)$ and the Stirling numbers of the second kind. At present, this is a purely combinatorial formula. It would be interesting to find some Lie theoretic significance for these coefficients.

We return to the exponential Hilbert series $E_\lambda(\alpha)=p(x)e^x$. We let $\Phi^+=\{\beta_1,\dots,\beta_d\}$ be an enumeration of the positive roots of $\mathfrak{g}$ and set $a_i=c_\lambda(\beta_i)$ for each positive root $\beta_i$. Then,
\begin{center}
$E_\lambda(x)=\displaystyle\left[\sum_{j=0}^dE_j\left(a_1,\dots,a_d\right)\left(x\frac{d}{dx}\right)^j\right]e^x$,
\end{center}
as before, where $E_j$ denotes the $j^{th}$ elementary symmetric function in $d$ variables. As $E_0(a_1,\dots,a_d)=1$, we can rewrite this as
\begin{center}
$E_\lambda(x)=\displaystyle\left[1+\sum_{j=1}^dE_j\left(a_1,\dots,a_d\right)\left(x\frac{d}{dx}\right)^j\right]e^x$.
\end{center}
For $j\geq1$, we have
\begin{center}
$\displaystyle\left(x\frac{d}{dx}\right)^je^x=\phi_j(x)e^x$,
\end{center}
where $\phi_j(x)$ is the $j^{th}$ Stirling polynomial. In other words,
\begin{center}
$\phi_j(x)=S(j,1)+S(j,2)x+\dots+S(j,j)x^j$,
\end{center}
where $S(j,i)$ is the number of ways to partition a $j$ element set into $i$ nonempty subsets. Recall, these numbers are called the Stirling numbers of the second kind.

Then we have
\begin{center}
$E_\lambda(x)=\left[1+\displaystyle\sum_{j=1}^dE_j(a_1,\dots,a_d)\sum_{i=1}^jS(j,i)x^i\right]e^x$.
\end{center}
The term in brackets is then a formula for the polynomial $p(x)$, namely
\begin{equation}
p(x)=1+\displaystyle\sum_{j=1}^dE_j(a_1,\dots,a_d)\sum_{i=1}^jS(j,i)x^i.
\end{equation}
From (3.1), it is immediately obvious that the constant term of $p(x)$ is one. As we saw in \S3, the leading term is the product of the coefficients, $a_i$. In Theorem 4 of \cite{Johnson2}, the author shows that the coefficient of $x$ is $\dim(L(\lambda))-1$. We compute the remaining coefficients below.

\begin{thmn}
Let $1\leq k\leq d=|\Phi^+|$. The coefficient of $x^k$ in $p(x)$ is given by
\begin{center}
$\displaystyle\sum_{j=k}^dE_j(a_1,\dots,a_d)S(j,k)$,
\end{center}
where $E_j$ is the $j^{th}$ elementary symmetric function in $d$ variables and $S(j,k)$ is the $k^{th}$ entry of row $j$ of the triangle of Stirling numbers of the second kind.
\end{thmn}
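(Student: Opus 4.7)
The plan is to extract the coefficient of $x^k$ directly from formula (3.1), which has already been derived in the discussion immediately preceding the theorem statement. Starting from
$$p(x) = 1 + \sum_{j=1}^d E_j(a_1,\dots,a_d) \sum_{i=1}^j S(j,i)\, x^i,$$
I will interchange the order of the double sum. The pair $(i,j)$ ranges over $1 \le i \le j \le d$; reordering so that $i$ is the outer index gives
$$p(x) = 1 + \sum_{i=1}^d \left( \sum_{j=i}^d E_j(a_1,\dots,a_d)\, S(j,i) \right) x^i.$$
Reading off the coefficient of $x^k$ for $1 \le k \le d$ then yields the claimed formula.

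In this approach there is essentially no obstacle: once (3.1) is in hand the theorem is a coefficient-extraction exercise, and the only subtlety is the bookkeeping of the swap. One should note that the constant $1$ contributes nothing to $x^k$ when $k \ge 1$, and that the convention $S(j,i) = 0$ for $i > j$ (used in \S1.4) is consistent with truncating the inner sum at $i = j$, so either convention may be used interchangeably when setting up the swap.

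As a sanity check I will verify the two extreme cases already established elsewhere. For $k = 1$ we have $S(j,1) = 1$ for all $j \ge 1$, so the formula collapses to $\sum_{j=1}^d E_j(a_1,\dots,a_d)$, which by Theorem 4 of \cite{Johnson2} should equal $\dim(L(\lambda)) - 1$; this amounts to the Weyl dimension formula identity $\prod_{\alpha \in \Phi^+}(1 + c_\lambda(\alpha)) = \dim(L(\lambda))$. For $k = d$ only $j = d$ contributes, and since $S(d,d) = 1$ the formula reduces to $E_d(a_1,\dots,a_d) = \prod_{\alpha \in \Phi^+} c_\lambda(\alpha)$, matching the leading-coefficient theorem of \S2. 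Both checks work out, which gives confidence that the index swap has been performed correctly.
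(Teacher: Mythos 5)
Your proposal is correct and is essentially the paper's own argument: both extract the coefficient of $x^k$ from formula (3.1), the paper by observing that only the summands with $j\geq k$ contribute a degree-$k$ term (which appears when $i=k$), and you by the equivalent reordering of the double sum. The added sanity checks at $k=1$ and $k=d$ are consistent with the remarks the paper makes after the theorem, but the core proof is the same.
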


\begin{proof}
Assume $1\leq k\leq d$. The only summands in (3.1) which contain a degree $k$ term are those with $j\geq k$. Therefore, the degree $k$ term of $p(x)$ is the same as the degree $k$ term of 
\begin{center}
$q(x):=\displaystyle\sum_{j=k}^dE_j(a_1,\dots,a_d)\sum_{i=1}^jS(j,i)x^i$.
\end{center}
The degree $k$ terms occur when $i=k$, and there is one for each elementary symmetric function in $q(x)$. Therefore, as claimed, the coefficient of the degree $k$ term is
\begin{center}
$\displaystyle\sum_{j=k}^dE_j(a_1,\dots,a_d)S(j,k)$.
\end{center}
\end{proof}

Note that Theorem 3 immediately implies that the coefficient of $x$ in $p(x)$ is $\dim(L(\lambda))-1$, as was shown in Theorem 4 of \cite{Johnson2}. As the Theorem still holds when $k=d$, Theorem 3 is a direct generalization of Theorem 2 in \S3.

\section{Examples}

In this section, we collect some examples of both representation-theoretic and combinatorial interest. The first example is in type $A_n$ and showcases how, through Theorem 3, we can prove interesting combinatorial formulas using the representation theory of $G$. These examples are not meant to be exhaustive. Instead, they are chosen to illustrate the types of combinatorial identities that can be proved using $E_\lambda(x)$.

\subsection{The first fundamental representation in type $A_n$}

In \cite{Johnson2}, a formula for $E_{\lambda}(x)$ was given for $\lambda=\omega_1$, where $\omega_1$ is the first fundamental dominant weight in type $A_n$. Here, we use Theorem 3 to explicitly compute the coefficients of $E_{\omega_1}(x)$. This leads to a combinatorial formula relating symmetric functions, binomial coefficients, and the Stirling numbers of the second kind, which leads to other interesting combinatorial identities.

\begin{prop}
Let $G=SL(n+1,\mathbb{C})$ $(n\geq1)$, and let $\omega_1$ be the first fundamental dominant weight of $G$. Then
\begin{equation}
E_{\omega_1}(x)=p(x)e^x,
\end{equation}
where $p(x)$ is the polynomial of the form
\begin{equation}
p(x)=\displaystyle\sum_{j=0}^n\frac{1}{j!}{n\choose j}x^j,
\end{equation}
\end{prop}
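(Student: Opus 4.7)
The idea is to compare the two sides of (4.1) coefficient-by-coefficient, using that the left-hand side is completely determined by the dimensions $\dim L(k\omega_1)$. In type $A_n$, the first fundamental representation is the standard representation $\mathbb{C}^{n+1}$, so $L(k\omega_1) \cong \mathrm{Sym}^k(\mathbb{C}^{n+1})$ has dimension $\binom{n+k}{n}$. Substituting into the definition of the exponential Hilbert series gives
$$E_{\omega_1}(x) = \sum_{k \ge 0} \binom{n+k}{n}\frac{x^k}{k!}.$$

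Next I would expand the proposed right-hand side $p(x)e^x$ as a Cauchy product. After multiplying through by $k!$, the coefficient of $x^k/k!$ in $p(x)e^x$ works out to
$$\sum_{j=0}^{\min(n,k)} \binom{n}{j}\binom{k}{j},$$
and the Vandermonde identity $\sum_j \binom{n}{j}\binom{k}{j} = \binom{n+k}{n}$ then matches this against $\dim L(k\omega_1)$, completing the proof.

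There is no real obstacle on this route: the heart of the argument is simply recognizing a Vandermonde sum. A longer alternative would run through Theorem 3 directly — one checks that the only nonzero constants $c_{\omega_1}(\alpha)$ come from the positive roots $\epsilon_1 - \epsilon_{j+1}$, with $c_{\omega_1}(\epsilon_1 - \epsilon_{j+1}) = 1/j$, and then invokes the classical identity expressing $E_j(1, 1/2, \ldots, 1/n)$ in terms of unsigned Stirling numbers of the first kind to reshape the formula of Theorem 3. Matching the two computations would produce a nontrivial identity involving Stirling numbers of both kinds, which is presumably why this particular proposition is featured here, since it sets up precisely the kind of Stirling-style combinatorial identities advertised in the introduction.
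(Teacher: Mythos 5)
Your argument is correct, and it takes a genuinely different route from the paper. You work directly from the definition $E_{\omega_1}(x)=\sum_{k\geq0}\dim L(k\omega_1)\frac{x^k}{k!}$, using the identification $L(k\omega_1)\cong\mathrm{Sym}^k(\mathbb{C}^{n+1})$, so $\dim L(k\omega_1)=\binom{n+k}{n}$, and then verify that the Cauchy product $p(x)e^x$ has $k$-th coefficient $\frac{1}{k!}\sum_{j}\binom{n}{j}\binom{k}{j}=\frac{1}{k!}\binom{n+k}{n}$ by Vandermonde; each step checks out. The paper instead stays inside the differential-operator framework: it invokes the formula $E_{\omega_1}^n(x)=\prod_{j=1}^{n}\bigl(1+\tfrac{x}{j}\tfrac{d}{dx}\bigr)e^x$ from \cite{Johnson2}, writes the recursion $E_{\omega_1}^n(x)=\bigl(1+\tfrac{x}{n}\tfrac{d}{dx}\bigr)E_{\omega_1}^{n-1}(x)$, and inducts on the rank, carrying out a Pascal-triangle coefficient computation. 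Your proof is shorter and makes the representation-theoretic content transparent (the coefficients are literally dimensions of symmetric powers), at the cost of using the explicit model of $L(k\omega_1)$, which is special to $\omega_1$ in type $A_n$; the paper's inductive proof is more computational but exercises exactly the operator machinery that the rest of the article depends on, and in particular meshes with the observation that the nonzero constants are $c_{\omega_1}(\alpha)=1/\ell(\alpha)$, which is what feeds Theorem 3 and produces the identity (4.3). Your closing remark is on target: combining your direct computation with the Theorem 3 route does yield precisely the Stirling-type identities the section is after.
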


\begin{proof}
The proof is by induction on the rank of $G$. For this induction, we use a subscript to denote the rank of the group: $E_{\omega_1}^n(x)$ denotes the exponential Hilbert series in rank $n$. The base case is when $n=1$, where we have
\begin{center}
$E_{\omega_1}^1(x)=(1+x)e^x$,
\end{center}
with coefficients $1=\frac{1}{0!}$ and $1=\frac{1}{1!}$. As the second row of Pascal's triangle consists of two ones, the claim holds when $n=1$.

Now assume the claim holds for rank $n-1$ and consider $E_{\omega_1}^n$. By Proposition 1 in \cite{Johnson2} we have
\begin{center}
$E_{\omega_1}^n(x)=\displaystyle\prod_{j=0}^n\left(1+\frac{x}{n}\frac{d}{dx}\right)e^x$.
\end{center}
Therefore, we can write $E_{\omega_1}^n(x)$ recursively:
\begin{center}
$E_{\omega_1}^n(x)=\left(1+\displaystyle\frac{x}{n}\frac{d}{dx}\right)E_{\omega_1}^{n-1}(x)$.
\end{center}
Since we assume the claim holds for rank $n-1$, and by using the product rule, we can then write
\begin{center}
$E_{\omega_1}^n(x)=e^x\left[\displaystyle\sum_{j=0}^{n-1}\left[\left(\frac{b_j}{j!}+\frac{jb_j}{nj!}\right)x^j+\frac{b_j}{nj!}x^{j+1}\right]\right]$,
\end{center}
where $b_j$ is the corresponding entry of Pascal's Triangle in row $n-1$.

We finish the proof by computing the coefficients of the various powers of $x$. First, for the constant term, the coefficient is
\begin{center}
$\displaystyle\frac{b_0}{0!}+\frac{0b_0}{n0!}=\frac{1}{1}=\frac{1}{0!}$,
\end{center}
as we expect. The coefficient of $x^n$ is also simply computed:
\begin{center}
$\displaystyle\frac{b_{n-1}}{n(n-1)!}=\frac{1}{n!}=\frac{1}{n!}$.
\end{center}
The computation of the other coefficients is a little more involved. Choose $k$ such that $1\leq k\leq n-1$. Then the coefficient of $x^k$ is
\begin{center}
$\displaystyle\frac{b_k}{k!}+\frac{kb_k}{nk!}+\frac{b_{k-1}}{n(k-1)!}=\frac{nb_k+kb_k+kb_{k-1}}{nk!}$.
\end{center}
If we convert $b_k$ and $b_{k-1}$ into binomial coefficents, the coefficient becomes
\begin{center}
$\displaystyle\left(\frac{n(n-1)!}{k!(n-1-k)!}+\frac{k(n-1)!}{k!(n-1-k)!}+\frac{k(n-1)!}{(k-1)!(n-k)!}\right)\frac{1}{nk!}$,
\end{center}
which can be rearranged and then simplified to yield
\begin{center}
$\displaystyle\frac{n!}{k!}\frac{1}{k!(n-k)!}=\frac{{n\choose k}}{k!}$,
\end{center}
and the claim is proved.
\end{proof}

Note that the above proof is dependent on the fact that, for $\omega_1$, we have
\begin{center}
$c_{\omega_1}(\alpha)=\displaystyle\frac{1}{l(\alpha)}$,
\end{center}
where $l(\alpha)$ denotes the length of the positive root $\alpha$, if the simple root, $\alpha_1$, dual to $\omega_1$ appears in $\alpha$, and is zero otherwise. Let $\Phi^+=\{\beta_1\dots,\beta_d\}$ and $d=|\Phi^+|$, as before. As many of the $c_{\omega_1}(\alpha)$ terms are zero, and those that are not zero give values of $1, 1/2, 1/3,\dots, 1/n$ exactly once each, we have
\begin{center}
$E_j(c_{\omega_1}(\beta_1),\dots,c_{\omega_1}(\beta_d))=E_j(1,1/2,\dots,1/n)$,
\end{center}
where the functions on the right are elementary symmetric functions in $n$ variables. Then from Theorem 3, we have
\begin{equation}
\displaystyle\sum_{j=k}^nE_j(1,1/2,\dots,1/n)S(j,k)=\frac{{n\choose k}}{k!},
\end{equation}
which is a purely combinatorial formula relating the Stirling numbers of the second kind and the binomial coefficients using representation theory.

We can go further. There is a well-known relationship between the elementary symmetric functions and a sequence of numbers called \emph{the Stirling numbers of the first kind}, which we will denote $c(n,k)$ (here we assume these numbers to be unsigned). Algebraically, $c(n,k)$ counts the number of permutations of $n$ elements with $k$ disjoint cycles (see Corollary 12.1 in \cite{Cha}). Explicitly,
\begin{center}
$E_j(1,1/2,\dots,1/n)=\displaystyle\frac{c(n+1,n+2-j)}{n!}$.
\end{center}
Replacing $E_j(1,1/2,\dots,1/n)$ in (4.3) then yields the following identity upon rearrangement:
\begin{equation}
\displaystyle\sum_{j=k}^nc(n+1,n+2-j)S(j,k)=\frac{1}{k!}{n\choose k}=(n-k)!{n\choose k}^2.
\end{equation}
The numbers on the right-hand side have combinatorial meaning. In particular, we have
\begin{center}
$T(n+1,n+1-k)=\displaystyle(n-k)!{n\choose k}^2$,
\end{center}
where $T(n,k)$ is the number of partial bijections of height $k$ of an $n$-element set\footnote{See integer sequence A144084 of the OEIS.}. Thus, we have a relationship between the two sequences of Stirling numbers and $T(n,k)$. In summary, we have shown that
\begin{equation}
T(n+1,n+1-k)=\displaystyle\sum_{j=k}^nc(n+1,n+2-j)S(j,k).
\end{equation}

\subsection{$E_\rho(x)$ and a classical recursion}

Assume $G$ is any semisimple, simply-connected linear algebraic group over $\mathbb{C}$, and, as is customary, let $\rho$ be the sum of the fundamental dominant weights of $G$. Explicity, if $G$ has rank $n$, then we set
\begin{center}
$\rho=\omega_1+\dots+\omega_n$,
\end{center}
where $\omega_1,\dots,\omega_n$ is an enumeration of the fundamental dominant weights of $G$. In this case, the exponential Hilbert series $E_\rho(x)$ is particularly simple to calculate, as we have
\begin{center}
$c_\rho(\alpha)=1$,
\end{center}
for any $\alpha\in\Phi^+$. Therefore,
\begin{center}
$E_\rho(x)=\left(1+x\displaystyle\frac{d}{dx}\right)^ne^x$.
\end{center}
It is straightforward to show that, in this case,
\begin{center}
$p(x)=S(n+1,1)+S(n+1,2)x+\dots+S(n+1,n+1)x^n$.
\end{center}
By Theorem 3, we have
\begin{center}
$S(n+1,k+1)=\displaystyle\sum_{j=k}^nE_j(1,1,\dots,1)S(j,k)$,
\end{center}
as the right-hand side of this equality must also give the coefficient of $x^k$ in $p(x)$. As $E_j(1,1,\dots,1)$ is just the number of terms in the $E_j(x_1,\dots,x_n)$, we have
\begin{center}
$E_j(1,1,\dots,1)=\displaystyle{n\choose j}$,
\end{center}
and we have the following combinatorial identity in the Stirling numbers of the second kind
\begin{equation}
S(n+1,k+1)=\displaystyle\sum_{j=k}^n{n\choose j}S(j,k).
\end{equation}
Thus, we have proved a well-known recursive relation of the Stirling numbers of the second kind (see, for example, Theorem 8.9 in \cite{Cha}) using representation theory.

\begin{bibdiv}
\begin{biblist}

\bib{AtM}{book}{
   author={Atiyah, M. F.},
   author={Macdonald, I. G.},
   title={Introduction to commutative algebra},
   publisher={Addison-Wesley Publishing Co., Reading, Mass.-London-Don
   Mills, Ont.},
   date={1969},
}

\bib{Cha}{book}{
   author={Charalambides, Charalambos A.},
   title={Enumerative combinatorics},
   series={CRC Press Series on Discrete Mathematics and its Applications},
   publisher={Chapman \& Hall/CRC, Boca Raton, FL},
   date={2002},
   pages={xvi+609},
   isbn={1-58488-290-5},
}

\bib{FuH}{book}{
   author={Fulton, William},
   author={Harris, Joe},
   title={Representation theory},
   series={Graduate Texts in Mathematics},
   volume={129},
   publisher={Springer-Verlag, New York},
   date={1991},
}

\bib{GoW}{book}{
   author={Goodman, Roe},
   author={Wallach, Nolan R.},
   title={Symmetry, representations, and invariants},
   series={Graduate Texts in Mathematics},
   volume={255},
   publisher={Springer},
   place={Dordrecht},
   date={2009},
}

\bib{GrW}{article}{
   author={Gross, Benedict H.},
   author={Wallach, Nolan R.},
   title={On the Hilbert polynomials and Hilbert series of homogeneous
   projective varieties},
   conference={
      title={Arithmetic geometry and automorphic forms},
   },
   book={
      series={Adv. Lect. Math. (ALM)},
      volume={19},
      publisher={Int. Press, Somerville, MA},
   },
   date={2011},
   pages={253--263},
}

\bib{Johnson}{article}{
	author={Johnson, Wayne A.},
	title={Multivariate Hilbert series of lattice cones},
	journal={Journal of Lie Theory},
	volume={27},
	date={2017},
	pages={297--314},
}

\bib{Johnson2}{article}{
	author={Johnson, Wayne A.},
	title={Exponential Hilbert series and the Stirling numbers of the second kind},
	journal={Discrete Mathematics},
	volume={341},
	date={2018},	
	number={5},
	pages={1237-1243}
}

\bib{MaV}{article}{
   author={Matsuo, A.},
   author={Veselov, A. P.},
   title={Universal formula for the Hilbert series of minimal nilpotent
   orbits},
   journal={Proc. Amer. Math. Soc.},
   volume={145},
   date={2017},
   number={12},
   pages={5123--5130},
}

\end{biblist}
\end{bibdiv}

\end{document}